\documentclass[11pt]{amsart}
\usepackage{geometry}   
\geometry{letterpaper}                   
\usepackage{graphicx}
\usepackage{amsmath}
\usepackage{amssymb, amsthm}
\usepackage{epstopdf}
\DeclareGraphicsRule{.tif}{png}{.png}{`convert #1 `dirname #1`/`basename #1 .tif`.png}

 \newtheorem{theorem}{Theorem}
    \newtheorem{corollary}[theorem]{Corollary}

    \theoremstyle{definition}

  \newcommand{\Z}{\ensuremath{{\mathbb{Z}}}}

\newcommand{\lk}{{\rm {lk}}}
\newcommand{\st}{{\rm {st}}}
  \newcommand{\E}{\ensuremath{{\mathbb{E}}}}
  \renewcommand{\P}{\ensuremath{{\rm{Prob}}}}

\newcommand{\RAAG}{right-angled Artin group}
\newcommand{\RAAGs}{right-angled Artin groups}

\newcommand{\G}{\Gamma}
\newcommand{\XG}{X_\Gamma}

\newcommand{\AG}{A_\G}            
   
\newcommand{\GpS}{\ensuremath{{\mathcal{GS}_\Gamma}}}  
\newcommand{\Gp}{\ensuremath{{\mathcal{G}_\Gamma}}}  
\newcommand{\SG}{\ensuremath{{\mathcal{S}_\Gamma}}}  


\title{Random groups arising as graph products}       
\author{Ruth Charney and Michael Farber}   
\thanks {R. Charney was partially supported by NSF grant DMS 0705396.}
\thanks{M. Farber was partially supported by a grant from the EPSRC}

\begin{document}


\begin{abstract} In this paper we study the hyperbolicity properties of a class of random groups arising as graph products associated to random graphs. Recall, that the construction of a graph product is a generalization of the constructions of right-angled Artin and Coxeter groups. 
We adopt the  Erd{\H{o}}s - R\'{e}nyi model of a random graph and find precise threshold functions for 
the hyperbolicity (or relative hyperbolicity).
We aslo study automorphism groups of right-angled Artin groups associated to random graphs. 
We show that with probability tending to one as $n\to \infty$, random right-angled Artin groups have finite outer automorphism groups, 
assuming that the probability parameter $p$ is constant and satisfies $0.2929 <p<1$.
\end{abstract}

\maketitle

\section{Introduction}
The needs of mathematical modeling of large systems of various nature 
raise the problem of studying random geometric and algebraic objects. 
For a system of great complexity it is unrealistic to assume that 
one is able to have a precise description of its configuration space; the latter should be viewed as being a partially known or a random space. 
%
%
%

The most developed stochastic--topological object is a random graph. 
The theory of random graphs, initiated in 1959 by Erd\"os and R\'enyi \cite{ER}, is nowadays a well-developed and fast growing branch of discrete mathematics. The theory of random graphs 
\cite{AS}, \cite{B}, \cite{JLR} offers a plethora of spectacular results and predictions playing an essential role in various engineering and computer science applications.

Random simplical complexes of high dimension were recently suggested and studied by Linial--Meshulam in~\cite{LM}, and 
Meshulam--Wallach in~\cite{MW}. The fundamental groups of random 2-complexes are random groups of a fairly general type.

Configuration spaces of mechanical linkages with bars of random  lengths were studied in 
\cite{F1}, \cite{FK}. These are closed smooth manifolds depending on a large number of independent random parameters. Although the number of homeomorphism type of these manifolds grows extremely fast with the dimension, their topological characteristics can be predicted with high probability when the number of links tends to infinity. 

The theory of random groups of M. Gromov \cite{Gr}, \cite{G} has a density parameter $0\le d\le 1$, and it is known that for
$d<1/2$ a random group is infinite and hyperbolic, while it is trivial for $d>1/2$, see \cite{Gr}, page 273.
We refer to \cite{Zuk} and \cite{Oll} for more details.

Random right-angled Artin groups, which were first studied in \cite{CF}, represent a different class of random groups. 
In Gromov's model a random group is given by a presentation with randomly generated relations where each of the relations has a fixed length 
and the total number of relations is also fixed (it depends on the density $d$). In the case of random right-angled Artin groups one generates  randomly the commutation relations between the generators, and each of these relations appears with a fixed probability $0\le p\le 1$. 

Other types of probabilistic questions in group theory, such as the existence of homomorphisms between groups of a specific type, have been investigated by Shalev, Liebeck, and others (see for example, \cite{Sh05}).

In this paper we study a class of groups, called graph products, that includes both right-angled Artin groups and right-angled Coxeter groups. The paper has two main goals.  First, we study hyperbolicity of graph products associated to random graphs. 
Second, we study automorphisms of random right-angled Artin groups and more generally, random graph products of cyclic groups.

The interest in these results lies not in the random graph theory itself, but in the applications to random group theory.  Graph products form a large and interesting class of groups which, as this paper demonstrates, lend themselves naturally to probabilistic methods. 

Both authors would like to thank the Forschungsinstitut f\"ur Mathematik in Zurich for their hospitality during the writing of this paper.


\section{The hyperbolicity of random graph products}

The construction  of graph product is a generalization of the constructions of right-angled Artin and Coxeter groups. 
Let $\G$ be a finite simplicial graph with vertex set $V$ and let $\{G_v\}$ be a collection of non-trivial, finitely generated groups indexed by $V$.  Then the \emph{graph product}  $\Gp =\mathcal{G}(\G, \{G_v\})$ is the quotient of the free product $\prod G_v$ by commutator relations between the generators of $G_v$ and $G_w$ whenever $v,w$ are connected by an edge in $\G$.  Well known examples of these groups are the right-angled Coxeter groups (when $G_v = \Z / 2\Z$ for all $v$) and the right-angled Artin groups (when $G_v = \Z $ for all $v$).  We will denote these groups by $W_\G$ and $A_\G$ respectively. 

We use the Erd{\H{o}}s - R\'{e}nyi model of random graphs in which each edge of the complete graph on $n$ vertices is included with probability $0<p<1$ independently of all other edges, where $p$ is a function of $n$. In other words, we consider the probability space $G(n,p)$ of all $2^{\binom n 2}$ subgraphs of the complete graph on $n$ vertices $\{1, 2, \dots,n\}$ and the probability that a specific graph $\Gamma\in G(n,p)$ appears as a result of a random process equals
\begin{eqnarray}\label{prob}
{\P}(\Gamma)\,  = \, p^{E_\Gamma}(1-p)^{{\binom n 2}-E_\Gamma},
\end{eqnarray}
where $E_\Gamma$ denotes the number of edges of $\Gamma$, see \cite{JLR}.

We say a statement holds \emph{asymptotically almost surely} or \emph {a.a.s.} if the probability that it holds tends to one as $n \to \infty$.  
For $f,g$ functions of $n$ and $c$ a constant, we use the notation
\begin{itemize}
\item  $f \to c$ to mean $f$ tends to $c$ as $n$ goes to infinity, and
\item  $f \sim g$ to mean $f/g \to 1$.
\end{itemize}

Our main results concerning hyperbolicity are as follows:

 \begin{theorem}\label{thm1} Fix a collection of non-trivial, finite groups indexed by the natural numbers $\{G_i\}$. For a random graph  $\Gamma\in G(n,p)$,  let $\Gp = \mathcal G(\G, \{G_i\})$ be the associated graph group.  Then
\begin{enumerate}
\item If $(1-p)n^2 \to 0$ then $\Gp$ is finite, a.a.s
\item If $p n \to 0$ then $\Gp$ is hyperbolic, a.a.s.
\item If $pn \to \infty$ and $(1-p)n^2 \to \infty$, then $\G$ is not hyperbolic, a.a.s. 
\end{enumerate}
\end{theorem}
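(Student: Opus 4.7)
My plan is to reduce all three parts to a single structural criterion: when every vertex group $G_v$ is a non-trivial finite group, the graph product $\Gp$ is word hyperbolic if and only if $\G$ contains no induced $4$-cycle. The ``only if'' direction is elementary: given an induced $4$-cycle on vertices $v_1, v_2, v_3, v_4$ (with edges $v_1v_2, v_2v_3, v_3v_4, v_4v_1$ and no diagonals), $\langle G_{v_1}, G_{v_3}\rangle = G_{v_1} * G_{v_3}$ commutes with $\langle G_{v_2}, G_{v_4}\rangle = G_{v_2} * G_{v_4}$, and each of these free products of non-trivial finite groups contains an element of infinite order, so the subgroup of $\Gp$ they generate contains $\Z \times \Z$. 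The converse, for graph products of finite groups, is a standard structural fact (an instance of Meier's criterion for hyperbolic graph products), which I would cite.

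Granted the criterion, Part (1) is immediate: the expected number of non-edges in $\G$ is $\binom{n}{2}(1-p) = O((1-p)n^2) \to 0$, so by Markov's inequality $\G = K_n$ asymptotically almost surely, and then $\Gp = G_1 \times \cdots \times G_n$ is a finite direct product of finite groups. Part (2) is a short first-moment calculation: letting $Y$ denote the number of induced $4$-cycles in $\G$,
\[
\E[Y] \;=\; 3\binom{n}{4}p^4(1-p)^2 \;\leq\; \tfrac{1}{8}(np)^4 \;\to\; 0
\]
under the hypothesis $np \to 0$, and a further application of Markov gives $Y = 0$ a.a.s.

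Part (3) will be the substantive one. I would first check that $\E[Y] \sim \tfrac{1}{8}(np)^4(1-p)^2 \to \infty$ under the hypotheses by splitting on whether $p \leq 1/2$ or $p > 1/2$: in the first case $(1-p)^2 \geq 1/4$ and $(np)^4 \to \infty$; in the second $(np)^4 \gtrsim n^4$, so $\E[Y] \gtrsim (n^2(1-p))^2 \to \infty$. Then I apply the second-moment method, $\P[Y = 0] \leq \mathrm{Var}(Y)/\E[Y]^2$. Writing $Y = \sum_C X_C$ as a sum of indicators over the possible induced $4$-cycles $C$, I decompose $\mathrm{Var}(Y) = \sum_{C,C'} \mathrm{Cov}(X_C, X_{C'})$ by the overlap size $|V(C) \cap V(C')|$: pairs with overlap at most $1$ contribute $0$ by independence, and the contributions from overlaps $2$ (sharing either an edge or a non-edge) and $3$ I bound case by case.

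The main obstacle is the second-moment estimate in the two extreme regimes. Near $p \to 0$ (with $np \to \infty$), the dangerous overlapping pairs are those sharing an edge, and the bound requires $np \to \infty$; near $p \to 1$ (with $(1-p)n^2 \to \infty$), the danger shifts to pairs sharing a non-edge, and the bound requires $(1-p)n^2 \to \infty$. Keeping the bookkeeping uniform across both regimes, and using each hypothesis exactly where the other fails, is the delicate step. A cleaner alternative would be to invoke Janson's inequality in place of raw Chebyshev, which packages these covariances via the pair-sum $\Delta = \sum_{C \sim C'} \P[X_C X_{C'} = 1]$ but still requires the same case analysis.
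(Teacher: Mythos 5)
Your proposal is correct and follows essentially the same route as the paper: reduce hyperbolicity to the absence of empty squares (induced $4$-cycles) via Meier's criterion, then use Markov/first-moment arguments for parts (1) and (2) and a second-moment (Chebyshev) computation, organized by the overlap pattern of pairs of potential squares and using the dichotomy $p\le 1/2$ or $p>1/2$ to deploy the two hypotheses, for part (3). The paper's proof of Theorem \ref{random} carries out exactly the covariance case analysis you outline.
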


Applying this to the case where all vertex groups are cyclic of order 2, we obtain:

\begin{corollary} The right-angled Coxeter group $W_\Gamma$ corresponding to a random Erd{\H{o}}s - R\'{e}nyi
graph $\Gamma\in G(n, p)$ is hyperbolic (a.a.s) 
if either 
$p(1-n)^{2}\to 0$ or $pn \to 0$. If however $pn \to \infty$ and  $(1-p)n^2\to \infty$ 
then the random right-angled Coxeter group $W_\Gamma$ is not hyperbolic, a.a.s.
\end{corollary}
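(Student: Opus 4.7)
The plan is to deduce the corollary as a direct specialization of Theorem~\ref{thm1}, since the right-angled Coxeter group is by definition the graph product $\Gp = \mathcal{G}(\Gamma, \{G_v\})$ obtained when every vertex group is $G_v = \Z/2\Z$. Because $\Z/2\Z$ is non-trivial and finite, the constant collection $G_i = \Z/2\Z$ (for all $i \in \mathbb{N}$) satisfies the standing hypothesis of Theorem~\ref{thm1}, so each of its three conclusions transfers to $W_\Gamma$ verbatim.

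For the hyperbolicity half, I would split into the two sufficient conditions offered by the corollary. In the regime $(1-p)n^{2}\to 0$ (which I read the printed condition $p(1-n)^{2}\to 0$ as intending), part (1) of Theorem~\ref{thm1} gives that $W_\Gamma$ is finite a.a.s.; since finite groups are trivially hyperbolic, we are done in this case. In the complementary regime $pn \to 0$, part (2) of Theorem~\ref{thm1} yields hyperbolicity of $W_\Gamma$ directly, with no further work. For the non-hyperbolicity half, the assumptions $pn \to \infty$ and $(1-p)n^{2}\to \infty$ are exactly the hypotheses of part (3) of Theorem~\ref{thm1}, so the conclusion is immediate.

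There is no real obstacle here: the entire content of the corollary lives in Theorem~\ref{thm1}, and the only things to check are that $\Z/2\Z$ meets the standing hypothesis on the vertex groups, and that the three regimes in the corollary line up with the three parts of the theorem. The lone cosmetic subtlety is matching the printed threshold $p(1-n)^{2}\to 0$ with the theorem's condition $(1-p)n^{2}\to 0$ and observing that in the finite case hyperbolicity holds vacuously, which is why the first two regimes can be bundled into a single hyperbolicity statement.
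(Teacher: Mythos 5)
Your proposal is correct and is exactly the paper's own route: the corollary is stated immediately after Theorem~\ref{thm1} as the specialization to vertex groups $\Z/2\Z$, with the finite case covered because finite groups are hyperbolic. You also rightly read the printed condition $p(1-n)^{2}\to 0$ as a typo for $(1-p)n^{2}\to 0$, which is what the theorem actually requires.
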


If the vertex groups $G_v$ are not necessarily finite, one may speak about hyperbolicity relative to the family of vertex subgroups $\{G_v\}$.  
We establish the following result:

\begin{theorem}\label{thm2} Fix a collection of non-trivial, finitely generated groups indexed by the natural numbers $\{G_i\}$. For a random graph  $\Gamma\in G(n,p)$,  let $\Gp = \mathcal G(\G, \{G_i\})$ be the associated graph group.  Then
\begin{enumerate}
\item If $(1-p)n^2 \to 0$ then $\Gp$ is isomorphic to the direct product $G_1 \times \dots \times G_n$, a.a.s.
\item If $pn\to 0$ then $\Gp$ is weakly hyperbolic relative to the family of subgroups $\{G_i\}_{i \leq n}$, a.a.s.
\item If $pn \to \infty$ and  $(1-p)n^2\to \infty$ then $\Gp$ is not weakly hyperbolic relative to the family of subgroups $\{G_i\}_{i \leq n}$, a.a.s.
\end{enumerate}
\end{theorem}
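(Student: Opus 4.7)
The plan is to address the three parts separately, in each case identifying the combinatorial feature of $\Gamma \in G(n,p)$ that holds a.a.s.~in the given regime and then extracting the algebraic consequence for $\Gp$.

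Part (1) follows from the first moment method: the expected number of non-edges in $\Gamma$ is at most $\binom{n}{2}(1-p) \to 0$, so a.a.s.~$\Gamma$ is complete and $\Gp$ is literally the direct product $G_1\times \cdots \times G_n$. For Part (2), the hypothesis $pn\to 0$ forces $\sum_{k\ge 3}(np)^k \to 0$, which up to constants bounds the expected number of cycles in $\Gamma$, so $\Gamma$ is a.a.s.~a forest. I would then establish the algebraic step as follows: when $\Gamma$ is a forest, peeling off a leaf $v$ with neighbour $w$ decomposes $\Gp$ as an amalgamated product of the graph product on $\Gamma\setminus\{v\}$ with $G_v \times G_w$ over $G_w$; induction on the number of vertices together with Bass--Serre theory then shows that the Cayley graph of $\Gp$, after coning off all vertex subgroups, is quasi-isometric to a tree, which is precisely weak relative hyperbolicity.

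Part (3) is the substantive one. The key combinatorial input is that in the regime $pn\to \infty$ and $(1-p)n^2 \to \infty$, $\Gamma$ a.a.s.~contains an induced $4$-cycle (``square''). The expected number of such squares is
\[
3\binom{n}{4}\,p^4(1-p)^2 \,\sim\, \tfrac{1}{8}(np)^4(1-p)^2,
\]
and a case split---either $p$ is bounded away from $1$, so $(np)^4\to\infty$ suffices, or $p\to 1$, in which case $((1-p)n^2)^2 \to\infty$ suffices---shows that this tends to infinity. A routine second moment computation, bounding covariances between pairs of candidate squares by the number of shared vertices, then yields existence a.a.s.

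Given an induced square on vertices $a,b,c,d$ with edges $ab,bc,cd,da$ and non-edges $ac,bd$, the subgroup $H=\langle G_a,G_b,G_c,G_d\rangle$ of $\Gp$ is isomorphic to $(G_a * G_c)\times (G_b * G_d)$ and sits quasi-isometrically in $\Gp$ by the usual normal form theory for graph products. Coning off $G_a$ and $G_c$ inside $G_a*G_c$ produces a space quasi-isometric to the associated Bass--Serre tree, which has infinite diameter since both factors are non-trivial; similarly for $G_b*G_d$. Hence after coning off all vertex subgroups of $\Gp$, the coned-off Cayley graph contains a quasi-isometrically embedded product of two infinite trees, which harbours arbitrarily large flats and so is not hyperbolic. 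The main obstacle is precisely this last claim: one must verify that coning off the \emph{other} vertex subgroups---those not among $\{G_a,G_b,G_c,G_d\}$---does not collapse the two tree factors. This should follow from the graph product normal form, since no non-trivial syllable in $G_a$ or $G_c$ can be absorbed into a coset of any other vertex group precisely because $a,c$ are not adjacent in $\Gamma$ (and likewise for $b,d$).
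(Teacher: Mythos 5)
Your proposal is correct in outline and reaches the same probabilistic thresholds, but it routes the group theory quite differently from the paper. The paper reduces all three parts to a single deterministic criterion --- Theorem \ref{relhyperbolic}: $\Gp$ is weakly hyperbolic relative to $\{G_v\}$ if and only if $\G$ has no empty square --- which it obtains from the action of $\Gp$ on the right-angled building $\XG$, Meier's CAT($-1$) deformation, and the Charney--Crisp relative Milnor--Svarc lemma; the probabilistic content is then exactly the first/second moment analysis of induced $4$-cycles (Theorem \ref{random}), identical to yours up to bookkeeping. You instead argue directly in the coned-off Cayley graph. In part (2) you prove the stronger combinatorial statement that $\G$ is a.a.s.\ a forest (the paper only needs ``no induced $4$-cycle,'' a one-line first moment), and you then pay for it with a bespoke algebraic argument via iterated amalgams and Bass--Serre theory; this works, though ``quasi-isometric to a tree'' is more than you need and the induction (a tree of tree-like pieces glued along cosets that become bounded after coning) deserves a sentence of justification. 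In part (3) your identification of the obstacle is exactly right, and the cleanest way to discharge it is the canonical retraction $\Gp \to H = \langle G_a,G_b,G_c,G_d\rangle$ killing all other vertex groups: it is Lipschitz, sends cosets of $G_e$ ($e\notin\{a,b,c,d\}$) to points and cosets of $G_a,\dots,G_d$ to cosets of the same, hence descends to a Lipschitz retraction of coned-off graphs, so the coned-off copy of $(G_a*G_c)\times(G_b*G_d)$ --- a product of two unbounded Bass--Serre trees --- is quasi-isometrically embedded and its arbitrarily large flat grids contradict hyperbolicity. Your approach buys independence from the building/CAT(0) machinery and works with bare normal forms; the paper's buys a clean if-and-only-if statement that settles parts (2) and (3) simultaneously and extends verbatim to Theorem \ref{thm1}.
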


Since the vertex groups in Theorem \ref{thm2} are arbitrary finitely generated groups, one could add an additional random element by taking them to be some class of random groups.  The same result still holds.  Likewise in Theorem \ref{thm1}, the vertex groups can be random finite groups.

The proofs of Theorems \ref{thm1} and \ref{thm2} will be given in \S \ref{proofs1}.


\section{Graph products and buildings}  

Let $\Gp$ be a graph product of groups $\{G_v\}$ where $v$ runs over vertices of a graph $\Gamma$.  
We call a complete subgraph of $\G$ a \emph{clique}.\footnote{Some author's reserve the word clique for maximal complete subgraphs.  We do not require maximality.} We can associate a simplicial complex $\XG=X(\G, \{G_v\})$ with $\Gp$ as follows.  Define two posets, partially  ordered by inclusion, 
\begin{align*}
\SG &= \{ G_T \mid T \subseteq V, ~\text{$T=\emptyset$ or $T$ spans a clique in $\G$}\}\\
\GpS&= \{ gG_T \mid g \in \Gp, ~G_T \in \SG \}.
\end{align*}

Recall that the geometric realization (or flag complex) associated to a poset $\mathcal P$ is the simplicial complex whose vertices are the elements of $\mathcal P$ and whose $k$-simplices are totally ordered subsets $(p_0 < p_1 < \dots < p_k)$.
Let $X_\G$ be the geometric realization of $\GpS$ and let $K \subset X_\G$ be the geometric realization of $\SG$. 
Left multiplication induces an action of $\Gp$ on $X_\G$ with fundamental domain $K$.  The stabilizer of the vertex $gG_T$ is conjugate to $G_T$.  Thus, the action of $\Gp$ on $X_\G$ is always cocompact,
and it is proper if and only if all vertex groups are finite. 

In the case that every vertex group is cyclic of order 2, $\XG$ is the well-known Davis complex for the right-angled Coxeter group $W_\G$.  For a more general graph product $\Gp$, $\XG$ is a right-angled buiilding whose apartments are isomorphic to this Davis complex.  (See \cite{Dav} or  \cite{CRSV} for a discussion of these buildings.) 

The complexes $\XG$ have a natural metric.  Though $\XG$ was defined as a simplicial complex,
it also has a natural cubical structure. To see this, it suffices to describe the cubical structure on the fundamental domain $K$.  For a pair  $G_T \subseteq G_{T'}$ in  $\SG$, let
$$[G_T,G_{T'}]= \{ G_R \in \SG \mid T \subseteq R \subseteq T' \}.$$
It is easily seen that $[G_T,G_{T'}]$ spans a cube of dimension $|T'|-|T|$ in $K$.  
The cubical structure induces a piecewise Euclidean metric on  $\XG$ which was shown by M.~Davis to be CAT(0) \cite{Dav}.

Moussong \cite{Mou} showed that in some cases, the Davis complex could be given a CAT(-1) metric and used this to find precise conditions on when an arbitrary Coxeter group is word hyperbolic.  In the case of right-angled Coxeter groups, his conditions reduce to the requirement that $\G$ does not contain an \emph{empty square}, that is, a 4-cycle such that neither diagonal spans an edge in $\G$.  
This theorem was generalized by J.~Meier to graph products of finite groups.

\begin{theorem}[\cite{Mei}] \label{hyperbolic} Let $\Gp = \mathcal G(\G,\{G_v\})$ be a graph group where each $G_v$ is a non-trivial, finite group. Then $\Gp$ is Gromov hyperbolic if and only if $\G$ has no empty squares.
\end{theorem}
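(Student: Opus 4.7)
I would prove both directions using the CAT$(0)$ cube complex $\XG$ introduced above.

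For the forward direction, suppose $\G$ contains an empty square on vertices $v_1, v_2, v_3, v_4$ with edges only along the boundary of the cycle. Because $v_1$ and $v_3$ are non-adjacent in $\G$, the graph product imposes no commutation between $G_{v_1}$ and $G_{v_3}$, so these generate the free product $G_{v_1} * G_{v_3}$ inside $\Gp$. Since both factors are nontrivial, this free product contains an infinite-order element; the same holds for $\langle G_{v_2}, G_{v_4}\rangle$. Each of $v_1, v_3$ is adjacent to each of $v_2, v_4$, so the two subgroups commute in $\Gp$, and together they contain a copy of $\Z \times \Z$. This rules out Gromov hyperbolicity of $\Gp$.

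For the reverse direction, the finiteness of the vertex groups makes the $\Gp$-action on $\XG$ proper and cocompact, so $\Gp$ is Gromov hyperbolic if and only if $\XG$ is. Following Moussong's strategy for Coxeter groups, I would equip $\XG$ with a $\Gp$-invariant piecewise hyperbolic metric and verify that it is locally CAT$(-1)$. By standard polyhedral curvature theory, this reduces to Gromov's link condition: the spherical link of every vertex, as a piecewise spherical complex, must contain no closed geodesic of length at most $2\pi$.

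The main obstacle is verifying this link condition. The link of a vertex in $\XG$ is a piecewise spherical complex whose combinatorial structure is governed by the flag complex of $\G$; the extra branching from coset choices for the vertex groups does not affect the analysis of short loops. A 4-cycle in $\G$ produces a closed edge-loop of length $2\pi$ in such a link under the Euclidean cubical metric. When the 4-cycle has a diagonal in $\G$, this loop is filled by 2-simplices coming from triangles in $\G$ and therefore fails to be locally geodesic. If $\G$ has no empty squares, every 4-cycle in $\G$ has such a diagonal, so all candidate short loops in every vertex link are filled. After perturbing the metric to be hyperbolic with cells of sufficiently small diameter, the link condition is verified and $\XG$ is CAT$(-1)$; hence $\Gp$ is Gromov hyperbolic. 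This is precisely where the ``no empty square'' hypothesis enters in an essential way, and where Meier's extension of Moussong's argument from right-angled Coxeter groups to general graph products of finite groups is carried out.
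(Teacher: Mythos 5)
Your proposal is correct and follows essentially the same route as the paper, which likewise reduces to hyperbolicity of $\XG$ via the proper cocompact action and Milnor--Svarc and then defers the CAT($-1$) deformation and link-condition verification to Meier's extension of Moussong's lemma, exactly as your sketch does. The only cosmetic difference is in the ``only if'' direction, where you exhibit the obstruction algebraically as a $\Z\times\Z$ inside $(G_{v_1}*G_{v_3})\times(G_{v_2}*G_{v_4})$, while the paper points to the corresponding $2$-flats in the apartments of $\XG$; these are two descriptions of the same phenomenon.
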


To do this, Meier (following Moussong) proves that if $\G$ has no empty squares, then the cubical metric on $\XG$ can be deformed to a CAT(-1) metric.  Conversely, if  $\G$ has an empty square, it is easy to show that the apartments in $\XG$ contain 2-flats, so $\XG$ cannot be hyperbolic.  Thus, $\XG$ is hyperbolic if and only if $\G$ has no empty squares. 
If the vertex groups are all finite, then the action of $\Gp$ on $\XG$ is proper and cocompact, so by the Milnor-Svarc lemma, $\Gp$ is quasi-isometric to $\XG$ and Meier's theorem follows.

If the vertex groups are not required to be finite, one can still draw some conclusions about $\Gp$, albeit weaker ones. While the action is no longer proper, it is discontinuous, that is,  the orbit of every point is discrete.   In \cite{CC}, the first author and J.~Crisp prove  a relative version of the Milnor-Svarc Lemma.  It states that if a finitely generated group $G$ acts discontinuously and cocompactly on a geodesic metric space $X$ with stabilizers conjugate to a collection of subgroups $\mathcal H$, then the graph obtained from the Cayley graph of $G$ by coning off all cosets of the subgroups in $\mathcal H$ is quasi-isometric to $X$.  A group $G$ is said to be \emph{weakly hyperbolic relative to $\mathcal H$} if this coned off Cayley graph is hyperbolic (see \cite{Far}).  Applying this to $\Gp$ acting on $\XG$, we see that $\Gp$ is weakly hyperbolic relative to  $\{G_v\}$ if and only if $\XG$ is hyperbolic. 

\begin{theorem}\label{relhyperbolic} Let $\Gp = \mathcal G(\G,\{G_v\})$ be a graph group where each $G_v$ is a non-trivial, finitely generated group.
Then $\Gp$ is weakly hyperbolic relative to $\{G_v\}$ if and only if $\G$ has no empty squares.
\end{theorem}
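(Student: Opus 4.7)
The plan is to mirror the proof of Theorem \ref{hyperbolic} (Meier's theorem) at essentially every step, leveraging the building structure of $\XG$ together with the relative Milnor--Svarc machinery introduced just above.

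First, I would apply the relative Milnor--Svarc lemma of \cite{CC} to reduce the statement to a question about the geometry of $\XG$. The graph product $\Gp$ acts cocompactly and discontinuously on $\XG$ with point stabilizers conjugate to the clique subgroups $G_T$. Since each $G_T$ is a finite direct product of the vertex subgroups $\{G_v : v \in T\}$, coning off cosets of $\{G_v\}$ in the Cayley graph of $\Gp$ is quasi-isometric to coning off cosets of $\{G_T\}$; either yields a space quasi-isometric to $\XG$. Therefore $\Gp$ is weakly hyperbolic relative to $\{G_v\}$ if and only if $\XG$ is Gromov hyperbolic. The remainder of the proof is to show that $\XG$ is hyperbolic iff $\G$ has no empty squares.

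The ``only if'' direction is immediate from the building picture: if $\G$ contains an empty square, then the Davis complex of $W_\G$ contains a $2$-flat, and since $\XG$ is a right-angled building whose apartments are copies of this Davis complex, $\XG$ itself contains a $2$-flat and cannot be hyperbolic.

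For the ``if'' direction, I would follow Moussong and Meier to deform the piecewise Euclidean cubical metric on $\XG$ to a CAT($-1$) metric, under the hypothesis that $\G$ has no empty squares. A CAT($-1$) space is Gromov hyperbolic, which completes the reduction. The main obstacle is that Meier's deformation was carried out under the assumption of finite vertex groups, whereas here the $G_v$ may be infinite. The key observation is that this deformation is purely local: it depends only on the combinatorial structure of the cubes and their links in $\XG$, which in turn is determined entirely by $\G$ (not by the orders of the $G_v$). The no-empty-squares condition on $\G$ is exactly the local condition needed to apply Moussong's CAT($-1$) criterion at each vertex link, and the argument proceeds verbatim.
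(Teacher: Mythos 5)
Your proposal follows the paper's argument essentially step for step: reduce via the relative Milnor--Svarc lemma of \cite{CC} to the question of whether $\XG$ is hyperbolic, then use the Moussong--Meier CAT($-1$) deformation (a local condition depending only on $\G$) for one direction and $2$-flats in the apartments for the other. The only addition is your explicit check that coning off the clique stabilizers $G_T$ rather than the vertex subgroups $G_v$ changes nothing up to quasi-isometry, a point the paper leaves implicit; otherwise the two arguments coincide.
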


We remark that for a group to be (strongly) relatively hyperbolic requires an additional condition on quasi-geodesics in the Cayley graph.  In general, one does not expect this additional condition to hold for $\Gp$. Indeed, Behrstock, Drutu, and Mosher \cite{BDM} have shown that right-angled Artin groups associated to connected graphs $\G$ are not (strongly) relatively hyperbolic with respect to \emph{any} collection of proper subgroups.


\section{Proofs of Theorems \ref{thm1} and \ref{thm2}.}\label{proofs1}

The discussion in the previous section shows that the hyperbolicity (or relative hyperbolicity) of a graph group depends only on the existence of empty squares in the graph.  In this section, we consider the probability that a random graph contains an empty square.

The goal of this section is to establish threshold functions for the existence of an empty square in a random graph $\G$.  We will prove

\begin{theorem}\label{random}  Let $\Gamma \in G(n, p)$ be a random graph.
\begin{enumerate}
\item If $(1-p)n^2 \to 0$ then $\G$ is a complete graph, a.a.s.
\item If $pn \to 0$ then $\G$ has no empty squares, a.a.s.
\item If $pn\to \infty$ and $(1-p)n^2 \to \infty$ then $\G$ has an empty square, a.a.s.
\end{enumerate}
\end{theorem}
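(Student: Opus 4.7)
Parts (1) and (2) are first-moment computations. For part (1), the expected number of edges missing from $\Gamma$ is $\binom{n}{2}(1-p) \sim \tfrac{1}{2}n^2(1-p)$, which tends to $0$ under the hypothesis; Markov's inequality then gives that $\Gamma$ is complete a.a.s. For part (2), let $X$ denote the number of empty squares in $\Gamma$. A potential empty square is specified by a $4$-element vertex set together with a cyclic ordering up to reflection, so there are $3\binom{n}{4}$ potential squares, each of which is an empty square with probability $p^4(1-p)^2$. Hence $\E[X] = 3\binom{n}{4}p^4(1-p)^2 \leq \tfrac{1}{8}(np)^4 \to 0$, and Markov again gives $\P(X \geq 1) \to 0$.

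For part (3), I would apply the second moment method $\P(X=0) \leq \operatorname{Var}(X)/\E[X]^2$ and verify the two ingredients. To see that $\E[X] \to \infty$, use the identity $n^4p^4(1-p)^2 = (np)^4(1-p)^2 = p^4\bigl((1-p)n^2\bigr)^2$: if $p$ stays bounded away from $1$ the first form wins via $(np)^4 \to \infty$, while if $p \to 1$ the second form wins via $\bigl((1-p)n^2\bigr)^2 \to \infty$; a subsequence argument handles mixed behavior.

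To bound the variance, expand $X = \sum_\sigma X_\sigma$ over potential empty squares and group ordered pairs $(\sigma,\tau)$ by $k = |V(\sigma) \cap V(\tau)|$. Pairs with $k \leq 1$ involve disjoint sets of potential edges, hence are independent; pairs with $k = 4$ contribute zero, because any two distinct $4$-cycles on the same $4$ vertices force some edge of one to be a diagonal of the other, creating a conflict between a required edge and a required non-edge. For $k = 3$ the only consistent structure gives $O(n^5)$ pairs with joint probability $p^6(1-p)^3$, so the contribution to $\operatorname{Var}(X)/\E[X]^2$ is of order $1/\bigl(n^3p^2(1-p)\bigr)$, which tends to $0$ under the hypotheses by the same dichotomy.

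The main obstacle is the $k = 2$ case, which must be split according to whether the shared pair of vertices is an edge or a diagonal in each of $\sigma$ and $\tau$. The mixed subcases (edge of one, diagonal of the other) contribute zero by the same kind of conflict; the two surviving subcases give $O(n^6)$ pairs each, with joint probabilities $p^7(1-p)^4$ (shared edge) and $p^8(1-p)^3$ (shared diagonal), yielding ratios to $\E[X]^2$ of order $1/(n^2p)$ and $1/(n^2(1-p))$ respectively. These go to $0$ precisely because $np \to \infty$ (whence $n^2p = n \cdot np \to \infty$) and $(1-p)n^2 \to \infty$, so both hypotheses of part (3) are genuinely used, each to control a different subcase.
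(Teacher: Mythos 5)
Your argument is correct and is essentially the paper's proof: the paper also handles (1) and (2) by the first moment method and (3) by the second moment method with exactly your case decomposition by the number of shared vertices and, for $k=2$, by whether the shared pair is a side or a diagonal (its Cases A--F match your $k\le 1$, shared diagonal, shared side, $k=3$, and $k=4$ terms, and its observation that either $p\ge\tfrac12$ or $1-p\ge\tfrac12$ is your dichotomy); phrasing it as $\operatorname{Var}(X)/\E[X]^2\to 0$ rather than $\E(X^2)/(\E X)^2\to 1$ is only cosmetic. The one slip is in the sentence ``pairs with $k=4$ contribute zero'': that is true only for $\sigma\ne\tau$, while the diagonal pairs $\sigma=\tau$ contribute $\sum_\sigma\operatorname{Var}(X_\sigma)\le\E[X]$ (the paper's Case F), so you should add that this term divided by $\E[X]^2$ is at most $1/\E[X]\to 0$, which follows from the $\E[X]\to\infty$ argument you already gave.
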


\begin{proof} The first statement is easy to prove. Namely, for a random graph $\Gamma \in G(n, p)$,  the probability that a given pair of vertices is not connected by an edge is $1-p$, hence the expected number of missing edges in $\G$ is
$${\binom {n} 2} (1-p) \sim \frac{1}{2}n^2(1-p).$$ 
By the first moment method (see p. 54 of \cite{JLR}),
 if this expectation goes to 0, then the probability that there exists a missing edge also goes to 0.
 Thus $n^2(1-p) \to 0$ implies that
$\Gamma$ is a complete graph with probability tending to one as $n\to \infty$. 

For the second statement, fix a set of $n$ vertices and consider an ordered 4-tuple of distinct vertices $(v_1,v_2,w_1,w_2)$.  Let $I_{(v_1,v_2,w_1,w_2)}: G(n,p) \to \{0,1\}$ be the random variable which takes the value $1$ 
on $\Gamma \in G(n,p)$ 
if and only if $(v_1,v_2,w_1,w_2)$ span an empty square in $\G$ with $\{v_1,v_2\}$ and $\{w_1,w_2\}$ appearing as diagonal pairs.  
\begin{figure}[h]
\begin{center}
\includegraphics[width=4cm]{square.eps}
\end{center}
\end{figure}
In more detail, one has $I_{(v_1,v_2,w_1,w_2)}(\Gamma)=1$ if and only if the edges $v_1w_1, v_1w_2, v_2w_1$ and $v_2w_2$ are included in $\Gamma$ and the edges 
$v_1v_2$ and $w_1w_2$ are not included in $\Gamma$. 
The sum
$$X=\sum I_{(v_1,v_2,w_1,w_2)}: G(n, p)\to \Z,$$
over all 4-tuples of vertices counts each empty square in $\G$ eight times, corresponding to the 8 reflections of the square.
The expectation $\E(I_{(v_1,v_2,w_1,w_2)})$ equals $p^4(1-p)^2$, hence
\begin{eqnarray*}
\E(X)&=& \sum  \E(I_{(v_1,v_2,w_1,w_2)})\\
 &=& n(n-1)(n-2)(n-3) p^4(1-p)^2 \\
 &\sim& n^4p^4(1-p)^2
 \end{eqnarray*}
If $pn \to 0$ as $n \to \infty$, then $(1-p) \to 1$ and $\E(X) \to 0$, hence the probability that $\G$ has an empty square goes to 0. This proves statement (2). 

For the third statement, we use the second moment method. Namely, the probability that $X \neq 0$ satisfies
$$ \P(X \neq 0) \geq \frac{(\E X)^2}{\E(X^2)} $$
(see \cite{JLR}, page 54).
Thus to prove (3), it suffices to show that $(\E X)^2 \sim \E(X^2)$.
By the previous paragraph, we have 
$$(\E X)^2 \sim n^8p^8(1-p)^4.$$
To compute $ \E(X^2)$, we divide the sum $$X^2= \sum I_{(v_1,v_2,w_1,w_2)}I_{(x_1,x_2,y_1,y_2)}$$ into several cases. Write $I_{\bf{v,w}}=I_{(v_1,v_2,w_1,w_2)}$ and $I_{\bf{x,y}}=I_{(x_1,x_2,y_1,y_2)}$.

Case (A).  Let $X_1$ denote the sum of all products $I_{\bf{v,w}}I_{\bf{x,y}}$ such that  no vertex appears in both 4-tuples $(v_1,v_2,w_1,w_2)$ and 
$(x_1,x_2,y_1,y_2)$. Then
$$\E(X_1) = \frac{n!}{(n-8)!} p^8(1-p)^4 \sim n^8 p^8(1-p)^4.$$

Case (B).  Let $X_2$ denote the sum over all $I_{\bf{v,w}}I_{\bf{x,y}}$ such that exactly one vertex appears in both 4-tuples.  Then
$$\E(X_2) = 16\frac{n!}{(n-7)!} p^8(1-p)^4 \sim 16 n^7p^8(1-p)^4$$

Case (C).   Let $X_3$ denote the sum over all $I_{\bf{v,w}}I_{\bf{x,y}}$ such that 2 vertices appear in both 4-tuples making one of the pairs $\bf{v}$ or $\bf{w}$ equal to one of the pairs $\bf{x}$ or $\bf{y}$ up to permutation, i.e., the potential squares share a pair of diagonal vertices.  In this case, 
$(I_{\bf{v,w}}I_{\bf{x,y}})$  depends on the existence of 8 sides and the non-existence of 3 diagonals, so
$$\E(X_3) = 8 \frac{n!}{(n-6)!} p^8(1-p)^3 \sim 8 n^6p^8(1-p)^3.$$

Case (D).   Let $X_4$ denote the sum over all $I_{\bf{v,w}}I_{\bf{x,y}}$ such that 2 vertices appear in both 4-tuples but in different diagonal pairs, i.e., the potential squares share an edge.  Then 
$(I_{\bf{v,w}}I_{\bf{x,y}})$  depends on the existence of 7 sides and the non-existence of 4 diagonals, so
$$\E(X_3) = 32 \frac{n!}{(n-6)!} p^7(1-p)^4 \sim 32 n^6p^7(1-p)^4.$$
Note that there is no need to consider the case in which some pair appears as a diagonal pair in one 4-tuple and an edge pair in the other since in that case either $I_{\bf{v,w}}$ or $I_{\bf{x,y}}$ must be 0.

Case (E).   Let $X_5$ denote the sum over all $I_{\bf{v,w}}I_{\bf{x,y}}$ such that the two 4-tuples share 3 vertices. Then
$$\E(X_5) = 16 \frac{n!}{(n-5)!} p^6(1-p)^3 \sim 16 n^5p^6(1-p)^3.$$

Case (F)  Let $X_5$ denote the sum over all $I_{\bf{v,w}}I_{\bf{x,y}}$ such that up to permutation, the two 4-tuples are the same. Then
$$\E(X_6) = 8 \frac{n!}{(n-4)!} p^4(1-p)^2 \sim 8 n^4p^4(1-p)^2.$$

Now compute
\begin{eqnarray}\label{long}
 \frac{\E(X^2)}{(\E X)^2} &=& \sum _{i=1}^{6}  \frac{\E(X_i)}{(\E X)^2} \nonumber \\
 &\sim & 1 + \frac{16}{n} +  \frac{8}{n^2(1-p)} + \frac{32}{n^2p} + \frac{16}{n^3p^2(1-p)} 
 +\frac{8}{n^4p^4(1-p)^2}.
 \end{eqnarray}
Note that since either $(1-p) \geq \frac{1}{2}$ or  $p \geq \frac{1}{2}$,
$$n^3p^2(1-p) \ge \min\{\frac{1}{2} n^3p^2, \frac{1}{4}n^3(1-p)\},$$
and
$$n^4p^2(1-p)^2 \ge \min\{\frac{1}{4}n^4p^4, \frac{1}{16} n^4(1-p)^2\}.$$
Thus, if $pn\to \infty$ and $(1-p)n^2\to \infty$, then all denominators in (\ref{long}) tend to infinity implying 
$$ \P(X \neq 0) \geq \frac{(\E X)^2}{\E(X^2)} \to 1.$$
This completes the proof of Theorem \ref{random}.
\end{proof}

 Theorems \ref{thm1} and \ref{thm2} now follow by combining Theorems \ref{hyperbolic}, \ref{relhyperbolic} and \ref{random}.


\section{Automorphism groups of random right-angled Artin groups}
 
A right-angled Artin group is a graph product in which vertex groups are infinite cyclic.  
In this section we review some basic properties of \RAAGs\  and refer the reader to \cite{HM95}, \cite{Ser89} and \cite{Lau95} for details.  For a general survey on these groups, see \cite{Ch07}.

To specify a \RAAG, we need only specify the graph $\G$.  If $V$ is the vertex set  of $\G$, then 
the \RAAG associated to $\G$ is the group with presentation
\[
\AG=\langle V \mid vw=wv \hbox{ if  $v$ and $w$ are connected by an edge in $\G$}\rangle.
\]

At one extreme, we have the case that $\G$ is a discrete graph (i.e., no edges) and $\AG$ is the free group on $V$.  At the other, is the case that $\G$ is a complete graph (i.e., every pair of vertices spans an edge) and $\AG$ is the free abelian group on $V$.

In this section, we consider right-angled Artin groups associated to random graphs $\Gamma\in G(n,p)$. 
It is clear that basic properties of $\Gamma$ are reflected in properties of the group.  For example,
$A_\Gamma$ decomposes as a free product if and only if $\Gamma$ is disconnected.  For $\Gamma\in G(n,p)$, Erd{\H{o}}s and R\'enyi \cite{ER} showed that this holds asymptotically almost surely if 
$p(n)=\frac{\log n -\omega (n)}{n}$, where $\omega : \mathbb N \to \mathbb R$ is a function such that $\lim_{n \to \infty} \omega (n) = \infty$.  Likewise,
$A_\Gamma$ decomposes as a direct product if and only if the complementary graph is disconnected, which  holds a.a.s. if  $1-p(n)=\frac{\log n -\omega (n)}{n}$.
In \cite{CF},  Costa and the second author analyze the cohomological dimension and the topological complexity of right-angled Artin groups associated to random graphs.

Automorphsim groups of right-angled Artin groups have been extensively studied in recent years.
(See, for example, \cite{CV08, CV09, Day09}.)  Here, we consider automorphism groups of right-angled Artin groups associated to random graphs.
Our goal is to prove the following theorem:

\begin{theorem}\label{thm11}
Let $\Gamma \in G(n, p)$ be a random graph where the probability parameter $p$ is independent on $n$ and satisfies
\begin{eqnarray}\label{parameter}1-\frac{1}{\sqrt{2}}< p < 1.
\end{eqnarray}
Then the right-angled Artin group $A_\Gamma$ determined by $\Gamma$ has a finite outer automorphism group ${\rm {Out}}(A_\Gamma)$, asymptotically almost surely. 
\end{theorem}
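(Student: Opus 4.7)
The plan is to use the Servatius--Laurence description of $\mathrm{Aut}(A_\Gamma)$. This group is generated by four families: inversions of generators; automorphisms induced by graph automorphisms of $\Gamma$; transvections $v\mapsto v w$, which are well-defined precisely when $\lk(v)\subseteq \st(w)$; and partial conjugations $\phi_{v,C}$ indexed by a vertex $v$ together with a connected component $C$ of $\Gamma\setminus\st(v)$. The first two families generate a finite subgroup for any finite graph $\Gamma$. A transvection has infinite order in $\mathrm{Out}(A_\Gamma)$, and a partial conjugation $\phi_{v,C}$ is non-inner, and hence of infinite order in $\mathrm{Out}$, precisely when $\Gamma\setminus\st(v)$ has at least two connected components. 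Therefore $\mathrm{Out}(A_\Gamma)$ is finite if and only if $\Gamma$ satisfies both (T) $\lk(v)\not\subseteq\st(w)$ for every ordered pair of distinct vertices, and (PC) $\Gamma\setminus\st(v)$ is empty or connected for every vertex $v$. It suffices to show that (T) and (PC) each hold asymptotically almost surely.

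Condition (T) is handled directly by the first-moment method. For each ordered pair $(v,w)$ of distinct vertices, the event $\lk(v)\subseteq\st(w)$ requires every other vertex $x$ to satisfy $x\not\sim v$ or $x\sim w$, an event of probability $1-p(1-p)$. Thus
\[
\E[\#\{(v,w):\lk(v)\subseteq\st(w)\}]=n(n-1)(1-p(1-p))^{n-2},
\]
which tends to $0$ as $n\to\infty$ for any constant $p\in(0,1)$. By Markov's inequality (T) holds a.a.s.

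For (PC), I would again apply the first-moment method, this time counting pairs $(v,A)$ where $A$ is a non-empty proper subset of $U_v:=V\setminus\st(v)$ with no edges of $\Gamma$ joining $A$ to $U_v\setminus A$. The existence of such a pair is equivalent to $\Gamma[U_v]$ being disconnected. Stratifying by $|A|=k$, each vertex outside $\{v\}\cup A$ must either lie in $\st(v)$ or in $U_v\setminus A$ with no edge to $A$; this gives
\[
\E[Y]=n\sum_{k=1}^{n-2}\binom{n-1}{k}(1-p)^k\bigl[(p+(1-p)^{k+1})^{n-1-k}-p^{n-1-k}\bigr],
\]
where the subtraction eliminates the trivial ``cut'' $A=U_v$. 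The $k=1$ term reduces, after simplification, essentially to the transvection count already controlled in (T). The new contributions come from $k\geq 2$; expanding the bracket and summing, the hypothesis $2(1-p)^2<1$, equivalent to $p>1-1/\sqrt{2}$, is precisely what forces the resulting series to tend to $0$ with $n$.

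The main obstacle is the quantitative control of the (PC) sum in the range $k\geq 2$: one must bound the expected number of cuts of every size simultaneously and then union-bound over the $n$ choices of $v$. The explicit threshold $p>1-1/\sqrt{2}$ arises from the $(1-p)^{2k}$ factors appearing in the leading terms of the expansion of $(p+(1-p)^{k+1})^{n-1-k}-p^{n-1-k}$ once the $\binom{n-1}{k}$ is absorbed, and verifying the resulting decay carefully is the technical heart of the proof.
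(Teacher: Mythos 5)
Your strategy coincides with the paper's: reduce finiteness of $\mathrm{Out}(A_\Gamma)$ via the Servatius--Laurence generators to the two graph conditions (T) and (PC), and kill each by a first-moment estimate. The transvection half is complete and correct, and literally agrees with the paper since $1-p(1-p)=1-p+p^2$. Your expectation for the (PC) count is also correct, and is just a reorganization of the paper's: expanding $(p+(1-p)^{k+1})^{n-1-k}-p^{n-1-k}$ by the binomial theorem, the term indexed by $j=|U_v\setminus A|\ge 1$ is exactly the paper's summand $\binom{n-1}{\ell}\binom{n-1-\ell}{s}p^{\ell}(1-p)^{n-1-\ell+st}$ with $s=k$, $t=j$, $\ell=n-1-k-j$; the factor $(1-p)^{st}$ is the ``no edges across the cut'' probability.

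The gap is the step you explicitly defer: showing this sum tends to $0$ when $2(1-p)^2<1$. That estimate is the entire content of the (PC) half, and your heuristic about ``$(1-p)^{2k}$ factors once the $\binom{n-1}{k}$ is absorbed'' is not yet an argument -- naive bounds genuinely fail here. For instance, bounding the bracket by its first summand, i.e. dropping the subtraction of $p^{n-1-k}$, and replacing $(1-p)^{k+1}$ by $(1-p)^3$ gives $n\sum_k\binom{n-1}{k}(1-p)^k(p+(1-p)^3)^{n-1-k}=n(1+(1-p)^3)^{n-1}\to\infty$; and bounding the inner sum over cut shapes by (number of terms)$\times$(largest term) loses the condition on $p$ altogether, proving too much. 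The device the paper uses is: with $s,t$ the sizes of the two sides of the cut, split off the terms with $s=1$ or $t=1$ (these decay for every constant $p$, much like the transvection count), and for $s,t\ge 2$ use $st\ge 2(s+t)-4$ to get $\sum_{s,t\ge 2}\binom{s+t}{s}(1-p)^{st}\le(1-p)^{-4}\bigl(2(1-p)^2\bigr)^{s+t}$; feeding this into the outer binomial sum yields
\begin{equation*}
\E(Y)\;\le\;2n^2(1-p)^{-1}\bigl(p+(1-p)^2\bigr)^{n-1}\;+\;n(1-p)^{-4}\bigl(p+2(1-p)^3\bigr)^{n-1},
\end{equation*}
and $p+2(1-p)^3<1$ is precisely $2(1-p)^2<1$, i.e. $p>1-1/\sqrt2$. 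You correctly located where the threshold comes from, but until you supply an inequality of this kind (the $st\ge 2(s+t)-4$ trick, or an equivalent), the partial-conjugation half of your proof is a plan rather than a proof.
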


Note that $1-1/\sqrt{2}\sim 0.2929$. 

It would be interesting to know if Theorem \ref{thm11} holds outside the range (\ref{parameter}). Our proof shows that a random right-angled Artin group $A_\Gamma$ admits no transvections for any fixed $p$; however to exclude partial conjugations one needs the assumption (\ref{parameter}). The definitions of transvections and partial conjugations are given  below.


\section{Proof of Theorem \ref{thm11}}

The automorphism groups of right-angled Artin groups were first studied by Servatius \cite{Ser89} who described a finite generating set for $Aut(\AG)$ 
under certain restrictions on $\G$.  This was later extended to arbitrary $\G$ by Laurence in \cite{Lau95}.  
The generators depend on certain structures in the defining graph $\G$.  For a vertex $v$, the \emph{link} $\lk(v)$ is defined as the set of all vertices which are connected to $v$ by an edge. The \emph{star} $\st(V)$ is the union of all edges containing $v$.\footnote{In some contexts, the link and star are taken to be full subgraphs of $\G$.  In our context, only the vertices in these subgraphs will matter.}
The Servatius-Laurence generating set consists of the following automorphisms.

\begin{itemize}
  \item \emph{Symmetries}: These are given by permutations of the generators arising from symmetries of the graph $\G$.

   \item  \emph{Inversions}: These send a generator $v \in V$ to its inverse.

   \item  \emph{Transvections}:  These map   $v\mapsto vw$ where $v,w \in V$ satisfy   $\lk(v) \subseteq \st(w)$.
 
  \item   \emph{Partial conjugations}:  These conjugate all of the generators in one component of 
  $\G \backslash \st(v)$ by $v$ and occur only if $\G \backslash \st(v)$ is disconnected.
  
\item  \emph{Inner automorphisms}: These conjugate the entire group by some generator $v$.
\end{itemize}

Let us take a closer look at transvections.  The condition that $\lk(v) \subseteq \st(w)$
is necessary to guarantee that the map  $v\mapsto vw$  preserves commutator relations.  With this in mind, we introduce a partial order on $V$.  For two vertices $v,w$, write 
\[
\textrm{$v \leq w$ if $\lk(v) \subset \st(w)$.}
\]
  It is shown in \cite{CV08} that this relation is transitive.  

Taking the quotient of $Aut(\AG)$ by the inner automorphisms gives the outer automorphism group $Out(\AG)$.
The first two types of automorphisms, symmetries and inversions, induce a finite subgroup of 
$Aut(\AG)$ (and hence of $Out(\AG)$)  while transvections  and partial conjugations have infinite order.   Hence $Out(\AG)$ is finite if and only if $\AG$ does not permit any transvections or partial conjugations.  To exclude transvections, we require that no two vertices are related by the partial order $\leq$.  
Consider, for example, the case where $\Gamma$ consists of a single cycle of length $n \geq 5$.  It is easily seen that in this case, no two vertices satisfy $lk(v) \subset st(w)$.
 To exclude partial conjugations, we require that $\G - \st(v)$ is connected for every vertex $v$.  We call such a graph \emph{star 2-connected}.

 To prove Theorem \ref{thm11}, we will show that for $\Gamma\in G(n,p)$ with $p$ satisfying (\ref{parameter}), the probability that $A_\Gamma$  admits a transvection or a partial conjugation tends to zero as $n$ goes to infinity. 

First we show that the probability that there exists a pair of vertices $v,w$ in $\Gamma$ with $\lk(v) \subset \st(w)$ tends to zero as $n \to \infty$. This would imply that $A_\Gamma$ admits no transvections.

\begin{figure}[h]
\begin{center}
\includegraphics[width=3in]{links.eps}
\end{center}
\end{figure}

For a pair of distinct vertices $v, w\in \{1, \dots, n\}$ and for a subset $L\subset \{1, \dots, n\}$ not containing $v, w$, consider the following random variable 
$$I_{v,w, L}: G(n, p) \to \{0,1\},
$$
where for a graph $\Gamma\in G(n, p)$ one has $I_{v,w, L}(\Gamma)=1$ if and only if in $\Gamma$ the following conditions hold: (a) the link of $v$ equals either $L$ or $L\cup \{w\}$
and (b) $L\subset \lk(w)$; otherwise 
we set $I_{v,w, L}(\Gamma)=0$. 
In other words, one has $I_{v,w, L}(\Gamma)=1$ if and only if (1) every vertex of $L$ is connected by an edge to the vertices $v$ and $w$; (2) no vertex of $\{1, \dots, n\}-L-\{v, w\}$ is connected by an edge to $v$. (See figure.)


The sum 
$$X=\sum I_{v,w, L}: G(n, p)\to \Z,$$
where $v, w, L$ run over all possible choices, 
counts the number of ordered pairs of vertices $(v, w)$ with the property that $\lk(v) \subset \st(w)$. 

The expectation $\E(I_{v, w, L})$ equals 
$$p^k(1-p)^{n-2-k}p^k= p^{2k}(1-p)^{n-2-k}$$ where $k=|L|$.  
Thus, we obtain
\begin{eqnarray}
\E(X) &=& n(n-1) \sum_{k=0}^{n-2}{\binom {n-2} k} 
p^{2k}(1-p)^{n-2-k} \\
&=& n(n-1)(1-p+p^2)^{n-2}.\nonumber\end{eqnarray}

Now suppose that 
\begin{eqnarray}\label{condition}
p(1-p)n -2\log n \to \infty, \quad \mbox{as}\quad n\to \infty.\end{eqnarray}
This can also be expressed by saying that
$$p(1-p)=\frac{2\log n +\omega(n)}{n}$$
where $\omega(n) \to \infty$ as $n\to \infty$. Note that any constant $0<p<1$ satisfies this condition.
Then denoting $x=p-p^2$ one has
\begin{eqnarray*} \log \E(X) &\le& 2 \log n +(n-2) \log(1-x)  \\ &=& 2\log n - (n-2)\left[x+\frac{x^2}{2}+ \frac{x^3}{3}+\dots\right]\\ 
&=& 2\log n -nx +x\left[2-(n-2)\left[\frac{x}{2}+ \frac{x^2}{3} + \dots\right]\right]\\
&\le& 2\log n - nx= - \omega(n) .
\end{eqnarray*}
Thus, assuming (\ref{condition}), we have $\E(X)\to 0$ as $n\to \infty$.  By the first moment method,
$\P(X>0) \le \E(X)$,  so we conclude that ${\rm {Prob}}(X>0)\to 0$ as $n\to \infty$. 

Now we examine conditions for the absence of partial conjugations, i.e. the conditions on $\Gamma$ such that the result of removing the star of any vertex is path-connected. 

Consider a partition of the set of vertices $\{1, \dots, n\}$ into $v\cup L\cup S\cup T$ where $v$ is a vertex and $L, S, T$ are disjoint subsets not containing $v$. 
We denote $\ell=|L|, s=|S|, t=|T|$. Let $J_{v, L,S,T}: G(n, p)\to \{0,1\}$ be the random variable which associates to a random graph $\Gamma\in G(n, p)$ one if and only if $L$ 
is the link of $v$ in $\Gamma$ and there are no edges in $\Gamma$ connecting a point of $S$ to a point of $T$. One has 
$$\E(J_{v, L, S, T})= p^\ell(1-p)^{n-1-\ell}(1-p)^{st}.$$
Consider also the sum 
$$Y= \sum J_{v, L, S, T}$$ where the sum is taken over all possible choices of $v, L, S, T$ with $s\ge 1$ and $t\ge 1$. 
For a random graph $\Gamma\in G(n, p)$ the number $Y(\Gamma)$ is positive if and only if removing the star of some vertex disconnects the graph. Thus 
$\P(Y>0)$ is the probability that a random graph fails to be star 2-connected.

For the expectation of $Y$ we have
\begin{eqnarray}\label{four}
\E(Y) &=& n\cdot \sum_{\stackrel {s\ge 1, t\ge 1} {1+\ell+s+t=n}} \binom {n-1} \ell \cdot \binom {n-1-\ell} s \cdot p^\ell\cdot (1-p)^{n-\ell-1+st}\nonumber\\
&=& n(1-p)^{n-1} \cdot \sum_{\ell=0}^{n-3} {\binom {n-1} \ell} \cdot \left(\frac{p}{1-p}\right)^\ell \cdot F_\ell(p),\end{eqnarray}
where 
\begin{eqnarray}F_\ell(p) = 
\sum_{\stackrel {s+t=n-1-\ell}{s\ge 1, t\ge 1}}\binom {s+t} s\cdot (1-p)^{st}.
\end{eqnarray}

Observe that for $s\ge 2$, $ t\ge 2$ one has $st=s(n-1-\ell-s) \ge 2(n-3-\ell)$. Thus separating the terms $s=1, t=1$ gives
\begin{eqnarray*}
F_\ell(p) &=& 2(n-1-\ell)\cdot (1-p)^{n-2-\ell} + \sum_{\stackrel {s+t=n-1-\ell}{s\ge 2, t\ge 2}}\binom {s+t} s\cdot (1-p)^{st}\\
 &\le &
2(n-1-\ell)\cdot (1-p)^{n-2-\ell} + (1-p)^{2(n-3-\ell)}\cdot 2^{n-1-\ell} \\ \\
&\le & 2n \cdot (1-p)^{n-2-\ell}+ (1-p)^{-4} \left[2(1-p)^2\right]^{n-1-\ell}.\end{eqnarray*}

Substituting this into (\ref{four}) we find
\begin{eqnarray*}
\E(Y) &\le &2n^2(1-p)^{n-2}\cdot \sum_{\ell=0}^{n-3} \binom {n-1} \ell \left(\frac{p}{1-p}\right)^\ell\left(1-p\right)^{n-1-\ell}\\
& & +\quad n(1-p)^{n-5} \cdot \sum_{\ell+0}^{n-3} \binom {n-1} \ell \left(\frac{p}{1-p}\right)^\ell \left(2(1-p)^2\right)^{n-1-\ell}\\
&\le & 2n^2\cdot(1-p)^{n-2}\cdot \left[\frac{p}{1-p}+ 1-p\right]^{n-1} \\
& & + \quad n\cdot (1-p)^{n-5}\left[\frac{p}{1-p} +2(1-p)^2\right]^{n-1} \\
&=& 2n^2(1-p)^{-1}\left[p+(1-p)^2\right]^{n-1} + n(1-p)^{-4}\cdot [p+2(1-p)^3]^{n-1}\\ \\
&=& 2n^2 (1-p)^{-1} (1-x)^{n-1} + n(1-p)^{-4} (1- y)^{n-1},
\end{eqnarray*}
where $x= p-p^2$ and $y= 2(p-1)(p-1-\frac{\sqrt{2}}{2})(p-1+\frac{\sqrt{2}}{2})$. Clearly for all $0<p<1$, one has  $0<x<1$; and it is easy to see that for $1-\frac{1}{\sqrt{2}}<p<1$, one also has $0<y<1$. 

Thus we obtain that as $n\to \infty$,  the expectation $\E(Y)$ tends to zero and by the first moment method, this implies that the probability  $\P(Y>0)$ tends to zero.
Therefore, the right-angled Artin group corresponding to a random graph $\Gamma$ with parameter $p$ 
satisfying (\ref{parameter}) admits no partial conjugations, a.a.s. 

Since we have already proven that $A_\Gamma$ admits no transvections, it follows that, for a random graph
$\Gamma\in G(n,p)$ with $p$ satisfying (\ref{parameter}) 
the group $A_\Gamma$ has a finite outer automorphism group. 

This completes the proof of Theorem \ref{thm11}.


These results also apply to more general  graph products $\Gp =\mathcal{G}(\G, \{G_v\})$ in which every vertex group is cyclic, but not necessarily infinite cyclic.  Corredor and Gutierrez \cite{CG10} have recently shown that the automorphsim groups of such graph products have a generating set analogous to the Servatius-Laurence generators for a right-angled Artin group described above.  The only difference is that inversions are replaced by isomorphisms of individual vertex groups (these are always of finite order) and there are additional restrictions on transvections  involving the orders of the vertex groups.  Once again, if there are no vertices with $\lk(v) \subset st(w)$ and no separating stars, then the automorphism group is necessarily finite.  Thus we obtain,

\begin{theorem}
Fix a collection of non-trivial, cyclic groups indexed by the natural numbers $\{G_i\}$. For a random graph  $\Gamma\in G(n,p)$,  let $\Gp = \mathcal G(\G, \{G_i\})$ be the associated graph group.  Suppose 
the probability parameter $p$ is independent of $n$ and satisfies
\begin{eqnarray}\label{parameter2}1-\frac{1}{\sqrt{2}}< p < 1.
\end{eqnarray}
Then $\Gp$ has a finite outer automorphism group ${\rm {Out}}(A_\Gamma)$, asymptotically almost surely. 
\end{theorem}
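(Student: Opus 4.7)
The plan is to reduce the statement to the random graph estimates already carried out in the proof of Theorem \ref{thm11}, using the Corredor--Gutierrez generating set to translate from the graph product setting to a condition on $\Gamma$. First I would note that among the five families of generators, the symmetries, the vertex-group automorphisms, and the inner automorphisms together contribute only a finite subgroup to $\mathrm{Out}(\Gp)$: the symmetries form a subgroup of the finite group $\mathrm{Aut}(\Gamma)$, each $\mathrm{Aut}(G_v)$ is finite since $G_v$ is cyclic, and inner automorphisms vanish in the outer quotient. Consequently, exactly as in the right-angled Artin case, $\mathrm{Out}(\Gp)$ is finite provided $\Gamma$ admits neither a pair of distinct vertices $v,w$ with $\lk(v) \subset \st(w)$ (the combinatorial precondition for a transvection, regardless of the extra order restrictions imposed by Corredor--Gutierrez), nor a vertex $v$ such that $\Gamma \setminus \st(v)$ is disconnected (the precondition for a partial conjugation).

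Second, both of these properties depend only on the underlying random graph $\Gamma \in G(n,p)$, and both have already been established a.a.s.\ under hypothesis (\ref{parameter2}) in the proof of Theorem \ref{thm11}. Specifically, the first moment computation $\E(X) = n(n-1)(1-p+p^2)^{n-2} \to 0$ shows that, for any constant $0 < p < 1$, a.a.s.\ no pair of vertices satisfies $\lk(v) \subset \st(w)$; and the bound on $\E(Y)$ via the quantities $x = p - p^2$ and $y = 2(p-1)(p-1-\tfrac{\sqrt{2}}{2})(p-1+\tfrac{\sqrt{2}}{2})$ shows that for $1 - 1/\sqrt{2} < p < 1$, a.a.s.\ $\Gamma$ is star $2$-connected. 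Combining these two a.a.s.\ statements with the reduction in the first paragraph proves the theorem.

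The main, and really only, obstacle is conceptual rather than probabilistic: one must verify that the Corredor--Gutierrez generating set genuinely reduces the question to the same two graph-theoretic conditions that governed the RAAG case. This amounts to checking that the additional order-sensitive restrictions on transvections can only shrink the set of admissible transvections, so that the absence of pairs with $\lk(v) \subset \st(w)$ rules them out uniformly in the choice of cyclic vertex groups, and that replacing the inversions by the full automorphism groups $\mathrm{Aut}(G_v)$ preserves the finiteness of that generator family (which it does, since cyclic groups have finite automorphism groups). No new probabilistic estimates are required beyond those already produced in Section 5.
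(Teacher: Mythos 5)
Your proposal is correct and follows essentially the same route as the paper: the paper likewise invokes the Corredor--Gutierrez generating set, observes that the vertex-group automorphisms are of finite order and that the extra order restrictions on transvections only make transvections rarer, and then reduces to the two graph conditions (no pair with $\lk(v)\subset \st(w)$, star 2-connectivity) already handled by the first-moment estimates for $X$ and $Y$ in the proof of Theorem \ref{thm11}. No substantive difference to report.
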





\def\cprime{$\prime$}

\end{document}